\theoremstyle{plain}
\newtheorem{theorem}{Theorem}[section]
\newtheorem{lemma}{Lemma}[section]
\theoremstyle{definition}
\newtheorem{definition}{Definition}[section]
\newtheorem{example}{Example}[theorem]
\numberwithin{equation}{section}
\begin{document}

\title[Extreme contractions on Banach spaces]{Extreme contractions on finite-dimensional polygonal Banach spaces-II}
\author[Ray, Roy, Bagchi and Sain]{Anubhab Ray, Saikat Roy, Satya Bagchi, Debmalya Sain}

\newcommand{\acr}{\newline\indent}

\address[Ray]{Department of Mathematics\\ Jadavpur University\\ Kolkata 700032\\ West Bengal\\ INDIA}
\email{anubhab.jumath@gmail.com}

\address[Roy]{Department of Mathematics\\ National Institute of Technology Durgapur\\ Durgapur 713209\\ West Bengal\\ INDIA}
\email{saikatroy.cu@gmail.com}

\address[Bagchi]{Department of Mathematics\\ National Institute of Technology Durgapur\\ Durgapur 713209\\ West Bengal\\ INDIA}
\email{satya.bagchi@maths.nitdgp.ac.in}

\address[Sain]{Department of Mathematics\\ Indian Institute of Science\\ Bengaluru 560012\\ Karnataka \\India\\ }
\email{saindebmalya@gmail.com}

\thanks{The research of Anubhab Ray is supported by DST Inspire in terms of doctoral fellowship under the supervision of Prof. Kallol Paul. The research of Saikat Roy is supported by CSIR Junior research fellowship. The research of Dr. Debmalya Sain is sponsored by Dr. D. S. Kothari Postdoctoral Fellowship under the mentorship of Professor Gadadhar Misra. Dr. Sain feels elated to acknowledge the wonderful hospitality of his childhood friend Mr. Subhro Jana and his wife Mrs. Poulami Mallik.}

\subjclass[2010]{Primary 46B20, Secondary 47L05}
\keywords{extreme contractions; polygonal Banach spaces; L-P property}

\begin{abstract}
We introduce the concept of weak L-P property for a pair of Banach spaces, in the study of extreme contractions. We give examples of pairs of Banach spaces (not) satisfying weak L-P property and apply the concept to compute the exact number of extreme contractions between a particular pair of polygonal Banach spaces. We also study the optimality of our results on the newly introduced weak L-P property for a pair of Banach spaces, by considering appropriate examples.  
\end{abstract}

\maketitle

\section{Introduction.}

The purpose of the present article is to study extreme contractions on finite-dimensional polygonal Banach spaces, in light of some previously  obtained results in a recent article \cite{SRP}. In contrast to the Hilbert space case \cite{Ga,K,N,S}, characterization of extreme contractions on Banach spaces is a difficult problem even in the finite-dimensional case, that remains vastly unexplored. We refer the readers to \cite{CM, G,I,Ki,L,LP,Sh,Sha} for some of the prominent research works in this direction that also illustrate the difficulty in building a general theory for characterizing extreme contractions on Banach spaces. The existence of such a general theory has been explored very recently in \cite{SPM} with emphasis on the two-dimensional case, by introducing the concept of compatible point pair (CPP). On the other hand, the work carried out in \cite{SRP} suggests that the aforesaid study has deep connections with the extremal structure of the unit balls of the concerned spaces. Our present article further explores this idea and establishes some interesting connections between extreme contractions and the extreme points of the unit balls of the domain space and the range space.\\
\\
In this article, letters $X$ and $Y$ denote real Banach spaces. Given a subset $ U $ of $ X, $ let $ |U| $ denote the cardinality of $ U. $ Let $B_X=\{x\in X: \|x\|\leq 1\}$ and $S_X=\{x\in X:\|x\|=1\}$ denote the unit ball and the unit sphere of $X$ respectively. Let $E_X$ be the set of all extreme points of the unit ball $B_X$. We say that $ X $ is polygonal if $ E_X $ is finite. Let $L(X,Y)$ be the Banach space of all bounded linear operators from $X$ to $Y$ endowed with usual operator norm. An operator $ T \in L(X,Y) $ is said to be an extreme contraction if $ T $ is an extreme point of the unit ball $ B_{L(X,Y)}. $ For a bounded linear operator $T\in L(X,Y), $ let $ M_T $ denote the norm attainment set of $ T, $ i.e., $M_T=\{x\in S_X:\|Tx\|=\|T\|\}.$ Motivated by the work done in the seminal article \cite{LP} on finite-dimensional extreme contractions, the following definition was introduced in \cite{SRP}.

\begin{definition}
Let $ X,~Y $ be Banach spaces. We say that the pair $ (X,Y) $ has L-P (abbreviated form of Lindenstrauss-Perles) property if a norm one bounded linear operator $ T \in L(X,Y) $ is an extreme contraction if and only if $ T(E_X) \subseteq E_Y. $ 
\end{definition}

\noindent For further continuation of the study of extreme contractions on finite-dimensional Banach spaces, we introduce the following definition:

\begin{definition}
Let $ X,~Y $ be Banach spaces. We say that the pair $ (X,Y) $ has  weak L-P  property if  for any extreme contraction $ T \in L(X,Y), $ we have that $ T(E_X) \cap E_Y \neq \phi. $
\end{definition}

We illustrate the utility of the above definition in the study of extreme contractions on finite-dimensional polygonal Banach spaces. We furnish examples of several pairs of polygonal Banach spaces that satisfy weak L-P  property. In particular, it follows from our results in the present article that weak L-P property for a pair of Banach spaces depends on the extremal structure of the unit balls of the domain space and the range space. Moreover, as a concrete application of the concept of  weak L-P  property of a pair of Banach spaces, we explicitly compute the exact number of extreme contractions in a particular case involving two-dimensional polygonal Banach spaces. Our results in the present article further underline the pivotal role of extreme points of the domain space and the range space in the study of extreme contractions.

\section{Main Results}
We begin this section with the remark that Lima \cite[Lemma 3.2]{L} has explicitly constructed an extreme contraction in $ L(l_{\infty}^{4},l_{1}^{4}) $ under which the image of any extreme point of the unit ball of the domain space $ l_{\infty}^{4} $ is not an extreme point of the unit ball of the range space $ l_{1}^{4}. $ In particular, this proves that the pair $ (l_{\infty}^{4},l_{1}^{4}) $ does not have weak L-P property. Our first aim is to obtain some nontrivial examples of pairs of polygonal Banach spaces that has weak L-P property. As we will see next, the number of extreme points of the unit ball of the domain space is of paramount importance in this  study. 

\begin{theorem}\label{th1}
Let $X$ be an $n-$dimensional real polygonal Banach space such that $B_X$ has exactly $2n+2$ extreme points. Let $Y=l_\infty^m,$ where $m\leq n.$ Then the pair $ (X,Y) $ has weak L-P property.
\end{theorem}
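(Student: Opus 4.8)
The plan is to first reduce the problem to an explicit description of the extreme points of $B_{L(X,Y)}$ via a coordinate-wise identification, and then run a counting argument over the vertices of $B_X$ that exploits both the hypothesis $|E_X| = 2n+2$ and the restriction $m \le n$.

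First I would identify $L(X, l_\infty^m)$ isometrically with the $l_\infty$-sum of $m$ copies of $X^*$. Writing $T = (f_1, \dots, f_m)$ with $f_i = \pi_i \circ T \in X^*$, where $\pi_i$ denotes the $i$-th coordinate functional on $l_\infty^m$, one has $\|T\| = \max_{1 \le i \le m} \|f_i\|$, so under this identification $B_{L(X,l_\infty^m)}$ is exactly the Cartesian product $B_{X^*} \times \cdots \times B_{X^*}$ ($m$ factors). Since the extreme points of a product of compact convex sets are precisely the products of the extreme points of the factors, $T$ is an extreme contraction if and only if each $f_i$ is an extreme point of $B_{X^*}$. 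This characterization is what I would establish (or cite) at the outset.

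Next I would bring in the polygonal geometry. As $X$ is polygonal, $B_X$ is a centrally symmetric $n$-dimensional polytope, and its polar $B_{X^*}$ is a polytope as well. The crucial geometric input is the polar-duality correspondence between faces of $B_X$ and faces of $B_{X^*}$: each extreme point $f_i \in E_{X^*}$ (necessarily of norm one) exposes the facet $F_i := \{x \in B_X : f_i(x) = 1\}$, an $(n-1)$-dimensional face of $B_X$. An $(n-1)$-dimensional face of a polytope has at least $n$ vertices, all of which are genuine vertices of $B_X$, hence elements of $E_X$. By central symmetry $-F_i$ is also a facet, and it is disjoint from $F_i$ (a common vertex would force $f_i(x) = 1 = -1$), so $F_i \cup (-F_i)$ contains at least $2n$ points of $E_X$. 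Consequently the symmetric exceptional set $N_i := E_X \setminus (F_i \cup (-F_i))$, consisting of the vertices $x$ with $|f_i(x)| < 1$, satisfies $|N_i| \le (2n+2) - 2n = 2$.

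Finally I would close with the count. Since $m \le n$, the union of exceptional sets obeys $\bigl|\bigcup_{i=1}^m N_i\bigr| \le 2m \le 2n < 2n+2 = |E_X|$, so there is a vertex $x \in E_X$ lying outside every $N_i$. For this $x$ we have $|f_i(x)| = 1$ for all $i$, whence $Tx = (f_1(x), \dots, f_m(x))$ has every coordinate equal to $\pm 1$ and is therefore an extreme point of $B_{l_\infty^m} = B_Y$; that is, $Tx \in T(E_X) \cap E_Y \ne \phi$, which is exactly the weak L-P property for $(X,Y)$. The step I expect to require the most care is the geometric claim that each extreme functional $f_i$ exposes a full facet carrying at least $n$ vertices, since the rest is bookkeeping; and it is worth emphasizing that the hypothesis of \emph{exactly} $2n+2$ extreme points (rather than more) is precisely what forces $|N_i| \le 2$ and makes the concluding inequality go through.
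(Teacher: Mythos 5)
Your proof is correct, but it follows a genuinely different route from the paper's. The paper invokes Theorem 2.2 of \cite{SRP} to reduce to the case $E_X \subseteq M_T$, assumes $T(E_X)\cap E_Y=\phi$, and then constructs explicit coordinatewise perturbations $T_1,T_2$ with $T=\frac{1}{2}(T_1+T_2)$, split into two cases according to whether two of the sets $G_i=\{j: |a_{ij}|<1\}$ intersect. You instead exploit the special structure of the codomain from the start: the isometry $L(X,l_\infty^m)\cong (X^*\oplus\cdots\oplus X^*)_\infty$ identifies $B_{L(X,Y)}$ with the $m$-fold product $B_{X^*}\times\cdots\times B_{X^*}$, so $T=(f_1,\dots,f_m)$ is an extreme contraction precisely when every $f_i\in E_{X^*}$; polar duality then makes each $f_i$ expose a facet of $B_X$, which carries at least $n$ vertices, so at most $2$ of the $2n+2$ vertices satisfy $|f_i(x)|<1$, and the pigeonhole bound $2m\le 2n<2n+2$ produces a vertex sent to a vertex of the cube. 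Each approach buys something: yours is self-contained (no appeal to \cite{SRP}), eliminates the case analysis, actually characterizes all of $E_{L(X,l_\infty^m)}$ rather than just establishing the weak L-P property, and makes the sharpness of both hypotheses transparent --- in Example \ref{ex1} one has $m=n+1$, the three exceptional pairs $N_i$ exhaust all six vertices of the hexagon, and the count fails exactly at the boundary. The paper's perturbation template, on the other hand, is codomain-agnostic: the same machinery is reused verbatim for $Y=l_1^m$ in Theorem \ref{th2}, where no product decomposition of the operator ball is available, so your argument would not transfer there.
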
 

\begin{proof}
Let $E_X=\{\pm x_1, \pm x_2, \cdots, \pm x_{n+1}\}.$ Let $T\in L(X,Y)$ be an extreme contraction. It follows from Theorem 2.2 of \cite{SRP} that $span(M_T\cap E_X)=X $ and moreover, if $|M_T\cap E_X|=2n$ then $T(M_T \cap E_X)\subseteq E_Y.$ Therefore, in this case we have nothing more to show. Let us assume that $E_X \subseteq M_T.$ We claim that $T(E_X)\cap E_Y\neq \phi.$ If possible, suppose that $T(E_X)\cap E_Y= \phi.$ Without any loss of generality, we may assume that $\{x_1, x_2, \cdots, x_n\}$ is a basis of $X.$ So there exist scalars $\alpha_1,\alpha_2,\cdots ,\alpha_n,$ with at least two of them non-zero, such that $x_{n+1}=\alpha_1x_1+\alpha_2x_2+\cdots +\alpha_nx_n.$ Let for each $ i \in \{1,2, \ldots, n\}, $ $T(x_i)=(a_{i1}, a_{i2}, \cdots, a_{im}).$\\ 
Therefore, $T(x_{n+1})=\alpha_1T(x_1)+\alpha_2T(x_2)+\cdots +\alpha_nT(x_n)=(b_1,b_2,\cdots ,b_m)~~~~\mathrm{(say)}.$
Clearly, $E_Y=\{(y_1,y_2, \cdots ,y_m):y_j\in\{+1,-1\}, 1\leq j\leq m\}. $ So for each $1\leq i\leq n$, there exists at least one $j,$ $1\leq j\leq m,$ such that $|a_{ij}|<1.$ Let us define $ G_i=\big\{ j\in \{ 1, 2, \dots , m \} : |a_{ij}|<1 \big\}.$
Clearly, $G_i\neq \phi$ for each $i.$ \\

Now we complete the proof of the theorem by considering the following two cases:\\

\noindent \textbf{Case 1:}
Suppose there exist distinct $p,q\in \{1,2, \cdots, n\}$ such that $G_p\cap G_q\neq \phi.$ Let $t\in G_p\cap G_q.$ We note that such a pair $\{p,q\}$ always occurs whenever $m < n.$ We define two linear operators $T_1,T_2 ~:~X\rightarrow Y $ by \\

$T_1(x_i)=T(x_i)\, \forall~~ i\neq p,q,$ \,\hfil \, $T_2(x_i)=T(x_i)\, \forall~~ i\neq p,q, $
$$T_1(x_p)=(a_{p1}, \cdots, a_{pt}-\epsilon_1,\cdots, a_{pm}), \,\,\,\,\, T_2(x_p)=(a_{p1},\cdots, a_{pt}+\epsilon_1,\cdots,  a_{pm}),$$
$$T_1(x_q)=(a_{q1}, \cdots, a_{qt}-\epsilon_2,\cdots, a_{qm}), \,\,\,\,\, T_2(x_q)=(a_{q1}, \cdots, a_{qt}+\epsilon_2,\cdots, a_{qm}),$$

where  $\epsilon_1$ and $\epsilon_2$ are chosen in such a way that $|a_{pt}\pm \epsilon_1|<1, |a_{qt}\pm\epsilon_2|<1$ and $\alpha_{p}\epsilon_1+\alpha_q\epsilon_2=0.$
Then $T_1(x_{n+1})=T_2(x_{n+1})=T(x_{n+1}),$ $T=\frac{1}{2}(T_1+T_2)$ and both $\|T_1\|,\|T_2\|=1.$ In addition, $T_1$ and $T_2$ are different from $T.$ Clearly, this contradicts that $T$ is an extreme contraction.\\

\noindent \textbf{Case 2:}
Suppose $G_p\cap G_q = \phi,$ for every $p,q\in \{1,2,\cdots ,n\}$ with $p\neq q.$ We should note that this case can only happen for $m=n.$ 
Since, $T(E_X)\cap E_Y=\phi,$ there exists some $r,$ $1\leq r\leq m,$ such that $|b_r|<1.$ Also there exists some $s,$ $1\leq s\leq n,$ such that $r\in G_s.$ We choose $\epsilon_r$ in such a way that $|a_{sr}\pm \frac{\epsilon_r}{\alpha_r}|<1$ (provided $\alpha_r\neq 0$) and $|b_r \pm \epsilon_r|<1.$ If $\alpha_r=0$ then we choose $\epsilon_r$ such that $|a_{sr}\pm \epsilon_r|<1.$ Now, define two linear operators $ T_1,T_2 ~:~ X \rightarrow Y$ by\\

$T_1(x_i)=T(x_i)\, \forall~~ i\neq s,$ \,\hfil  \, $T_2(x_i)=T(x_i)\, \forall~~ i\neq s,$ 
$$T_1(x_s)=(a_{s1}, \cdots, a_{sr}-\frac{\epsilon_r}{\alpha_r},\cdots, a_{sm}),\,\,\,\,\, T_2(x_s)=(a_{s1}, \cdots, a_{sr}+\frac{\epsilon_r}{\alpha_r},\cdots, a_{sm}).$$

Then,  $T=\frac{1}{2}(T_1+T_2)$ and $\|T_1\|=\|T_2\|=1.$ In addition, $T_1$ and $T_2$ are different from $T.$ This is a contradiction to the fact that $T$ is an extreme contraction. In other words, whenever $ T \in L(X,Y) $ is an extreme contraction, we have that $ T(E_{X}) \cap E_{Y} \neq \phi. $ This completes the proof of the fact that the pair $ (X,Y) $ has weak L-P property and establishes the theorem.
\end{proof}

In the next example we illustrate that it is not possible to improve the condition $ m \leq n $ in the above theorem.

\begin{example} \label{ex1}
Let $ X $ be the two dimensional real Banach space whose unit sphere is a regular hexagon with the vertices $ \pm x_1 = \pm(1,0), \pm x_2 = \pm (\frac{1}{2},\frac{\sqrt 3}{2}), \pm x_3 = \pm(-\frac{1}{2},\frac{\sqrt 3}{2}) $. Let $Y=l_{\infty}^3.$ Clearly, $ \{ x_1, x_2 \} $ forms a basis of $ X $ and $ x_3 =x_2-x_1. $ Now, we define a linear operator $T:X\rightarrow Y$ in the following way:

$$ T(x_1)=(1,-1,0),\,\,\,\ T(x_2)=(1,0,1). $$

Therefore, $ T(x_3)=(0,1,1).$ We claim that $T$ is an extreme contraction. If not, then there exists $T_1,T_2\in S_{L(X,Y)}$ such that $T\neq T_1,T_2$ and $T=\frac{1}{2}(T_1+T_2).$ Let 
$$T_1(x_1)=(1+\epsilon_{11},-1+\epsilon_{12},\epsilon_{13}),\,\,\,\,T_1(x_2)=(1+\epsilon_{21},\epsilon_{22},1+\epsilon_{23}),$$
where $\epsilon_{ij} $ are arbitrary real numbers for all $i\in\{1,2\}$ and for all $j\in\{1,2,3\}.$
Since $T=\frac{1}{2}(T_1+T_2),$ we must have,
$$ T_2(x_1)=(1-\epsilon_{11},-1-\epsilon_{12},-\epsilon_{13}),\,\,\,T_2(x_2)=(1-\epsilon_{21},-\epsilon_{22},1-\epsilon_{23}) .$$
 Now, as $ T_1, T_2 \in S_{L(X,Y)}, $  it is easy to see that $\epsilon_{11}=\epsilon_{12}=\epsilon_{21}=\epsilon_{23}=0. $ So we have, $ T_1(x_3)=(0,1+\epsilon_{22},1-\epsilon_{13}) $ and $ T_2(x_3)=(0,1-\epsilon_{22},0,1+\epsilon_{13}). $ Moreover, if any of the $\epsilon_{13}$ or $\epsilon_{22}$ is non-zero, then either $\|T_1(x_3)\|$ or $\|T_2(x_3)\|$  becomes greater than $1.$ Therefore we must have $\epsilon_{13}=\epsilon_{22}=0.$ Thus, we have that $T_1=T_2=T. $ However, this is clearly a contradiction to our assumption that $T\neq T_1,T_2.$ Hence, $T$ is an extreme contraction although $T(E_X)\cap E_Y=\phi.$ Therefore, the pair $ (X,Y) $ does not have weak L-P property.
\end{example}

In view of Theorem \ref{th1}, it is natural to ask whether $ l_{\infty}^{m} $ can be replaced by other well-known polygonal Banach spaces, say, $ l_{1}^{m}. $ We will prove a positive result in this direction. Let us first fix the following notation in order to proceed further:\\

\begin{definition} For a fixed index $ i, $ let $ v_i=(a_{i1}, a_{i2}, \dots , a_{im}) \in \mathbb{R}^{m} $ be an $ m $-tuple. Let us define $ C_i=\{ j\in \{1,2, \dots, m \} : a_{ij}\neq 0 \} $. We would like to  observe that if $ v_i $ is a non-extreme point on the unit sphere of $ l_1^{ m } $, then $ |C_i|\geq 2 $.
\end{definition}

We require the following lemma in order to prove the desired result. 

\begin{lemma} \label{combinatorics}
Let $ \{ v_i: i\in \{1, 2, \dots, k \} \} $ be a collection of non-extreme points on the unit sphere of $ l_1^{ m } $, where $ m\geq 2 $.\\

(i) If $ k>m(m-1) $ then there exists a triplet of distinct numbers $ \{ r,s,t \} \subseteq \{ 1, 2, \dots , k \} $ such that $ | C_r\cap C_s\cap C_t |\geq 2 $.\\

(ii) If $ k=m(m-1) $ and for any triplet of distinct numbers $ \{ r, s, t \} \subseteq \{ 1, 2, \dots , k \} $, $ | C_r\cap C_s\cap C_t|\leq 1 $ then $ | C_i |=2 $ for all $ i\in \{ 1, 2, \dots , k \} $. In this case, for any pair of distinct numbers $  \alpha, \beta \in \{ 1, 2, \dots , m \}, $ there exist distinct $ p,q \in \{ 1, 2, \dots , k \} $ such that $  C_p\cap C_q = \{ \alpha, \beta \} $.
\end{lemma}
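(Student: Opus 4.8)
My plan is to convert both parts into a single double-counting statement about how many of the supports $C_i$ contain a given pair of coordinates. For a two-element subset $P=\{\alpha,\beta\}\subseteq\{1,2,\dots,m\}$, write $d_P=|\{\, i : P\subseteq C_i \,\}|$ for the number of supports containing $P$. The key reformulation I will use is that the hypothesis ``$|C_r\cap C_s\cap C_t|\leq 1$ for every triplet of distinct indices'' is equivalent to ``$d_P\leq 2$ for every pair $P$'': indeed, $|C_r\cap C_s\cap C_t|\geq 2$ occurs precisely when some pair $P$ lies inside all three of $C_r,C_s,C_t$, that is, when some $P$ has $d_P\geq 3$. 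Since each $v_i$ is a non-extreme point of the unit sphere of $l_1^{m}$, the definition preceding the lemma gives $|C_i|\geq 2$, so every $C_i$ contains at least one pair.

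The counting identity I will exploit is
$$ \sum_{i=1}^{k}\binom{|C_i|}{2} \;=\; \sum_{P}d_P, $$
obtained by counting the incidences $(P,i)$ with $P\subseteq C_i$ in two ways, the right-hand sum ranging over all $\binom{m}{2}$ pairs. On the left, $\binom{|C_i|}{2}\geq 1$ because $|C_i|\geq 2$, so the left-hand side is at least $k$; on the right, if $d_P\leq 2$ for all $P$ then the sum is at most $2\binom{m}{2}=m(m-1)$.

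For part (i) I argue by contraposition. If no triplet satisfies $|C_r\cap C_s\cap C_t|\geq 2$, then $d_P\leq 2$ for every pair, and the identity yields $k\leq \sum_i\binom{|C_i|}{2}=\sum_P d_P\leq m(m-1)$, contradicting $k>m(m-1)$. Hence the required triplet exists. For part (ii), the hypotheses supply $d_P\leq 2$ for all $P$ together with $k=m(m-1)$, so the chain
$$ m(m-1)=k\leq \sum_{i=1}^{k}\binom{|C_i|}{2}=\sum_{P}d_P\leq 2\binom{m}{2}=m(m-1) $$
collapses to equalities throughout. Equality in $k\leq\sum_i\binom{|C_i|}{2}$ forces $\binom{|C_i|}{2}=1$, i.e. $|C_i|=2$, for every $i$, which is the first assertion. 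Equality in $\sum_P d_P\leq 2\binom{m}{2}$ forces $d_P=2$ for every pair $P$; since each support is now itself a pair, $d_P=2$ means exactly two indices $p,q$ satisfy $C_p=C_q=P$, and for $P=\{\alpha,\beta\}$ these give distinct $p,q$ with $C_p\cap C_q=\{\alpha,\beta\}$.

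The whole argument is a clean double count, so I do not expect a serious obstacle. The only points requiring care are the equivalence between the triple-intersection hypothesis and the bound $d_P\leq 2$, and, in part (ii), tracking the two equality cases separately so as to extract both $|C_i|=2$ and the realizability of every pair.
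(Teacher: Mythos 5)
Your proof is correct, and it takes a genuinely different route from the paper's. The paper argues by an extremal case analysis: assuming all triple intersections have size at most $1$, it lets $u=\max_i|C_i|$, splits the collection into the supports contained in a maximal $C_1$ and those not, and bounds the two classes separately in the cases $u=m$, $u=m-1$, $u\leq m-2$ to conclude $k\leq m(m-1)$; part (ii) is then extracted by inspecting when these bounds are tight. Your double count of incidences $(P,i)$ with $P\subseteq C_i$, together with the observation that the triple-intersection hypothesis is exactly the statement $d_P\leq 2$ for every pair $P$, reaches the same inequality $k\leq\sum_i\binom{|C_i|}{2}=\sum_P d_P\leq m(m-1)$ in one line, works uniformly for all $m\geq 2$ without the separate treatment of $m=2$, and makes the equality analysis in part (ii) transparent: both forced equalities ($|C_i|=2$ for all $i$, and $d_P=2$ for all $P$) drop out immediately, whereas the paper's version of this step is asserted rather than derived. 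The only thing the paper's approach "buys" is some structural information about which supports can sit inside a maximal one, but none of that is needed for the lemma; your argument is cleaner and, if anything, more rigorous.
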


\begin{proof}
If $ m=2 $ then it is easy to see that both $ (i) $ and $ (ii) $ are trivially true. Without any loss of generality, let us assume that $ m\geq 3.$\\

$ (i) $ If possible, suppose that for any triplet of distinct numbers $ \{ r, s, t\} \subseteq \{ 1, 2, \dots , k \} $, we have that $ | C_r\cap C_s\cap C_t|\leq 1 $. Without any loss of generality, suppose $ |C_1| =  \max  \{ |C_i| : 1\leq i \leq k \} = u $. Define $ \mathcal{A} = \{ v_i : C_i \subseteq C_1 \} $ and $ \mathcal{B} = \{ v_i : C_i \not\subset C_1 \} $. It is immediate that $ \mathcal{A} \cap \mathcal{B} = \phi $ and therefore $ k = | \mathcal{A} | + | \mathcal{B} | $. According to our assumption, for any triplet of distinct numbers $ \{ r, s, t \} \subseteq \{ 1, 2, \dots , k \} $, we have that $ | C_r\cap C_s\cap C_t|\leq 1 $. It is easy to observe that $ | \mathcal{A} | \leq \{ { u \choose 2 } +1 \} $. Now, if $ u=m $ then $ \mathcal{B} = \phi. $ Hence, we have $ k < m(m-1). $ Also, when $ u = m-1 $, $ | \mathcal{B} | \leq 2(m-1). $ Therefore, in that case we again have that $ k \leq m(m-1) $ and equality holds if $ m= 3. $ Finally, if $ u \leq (m-2) $ then it is easy to deduce that $ | \mathcal{B} | \leq 2\{ { m - u \choose 2 } + { u \choose 1 } { m-u \choose  1 }  \} $. Consequently $ k \leq m(m-1) $ and equality holds if $ u = 2.  $ In each of these cases, we arrive at a contradiction to our hypothesis that $ k > m(m-1). $ This completes the proof of $ (i) $.\\

$ (ii) $ If possible, suppose that $ | C_i | \neq 2 $ for some $ i\in \{ 1, 2, \dots , k \} $. Without any loss of generality, we may assume that $ |C_1| =  \max  \{ |C_i| : 1\leq i \leq k \} = u \geq 3 $. Then by analogous arguments as given in the proof of $ (i) $, we can conclude that $ k < m(m-1) $. This contradicts the fact that $k = m(m-1) $. Therefore $ | C_i |=2 $ for all $ i \in \{ 1, 2, \dots , k \} $. Now, $ k = m(m-1) $ vectors are consistent with the condition that for any triplet of distinct numbers $ \{ r, s, t \} \subseteq \{ 1, 2, \dots , k \} $, we have that $ | C_r\cap C_s\cap C_t|\leq 1 $. Therefore, for each pair of distinct numbers $ \alpha, \beta  \in \{ 1, 2, \dots , m \}, $  there must exist exactly two vectors $ v_p, v_q \in S_{l_1^{m}} $ among the $ m(m-1) $ vectors such that $  C_p\cap C_q  = \{ \alpha, \beta \}. $ This completes the proof of $ (ii) $.
\end{proof}

As an immediate application of Lemma \ref{combinatorics}, we next obtain another class of pairs of Banach spaces that has weak L-P property.

\begin{theorem}\label{th2}
Let $X$ be an $n-$dimensional real polygonal Banach space such that $B_X$ has exactly $2n+2$ extreme points. Let $Y=l_1^m,$ where $m(m-1) \leq n.$ Then the pair $ (X,Y) $ has weak L-P property. 
\end{theorem}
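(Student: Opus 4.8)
The plan is to mirror the architecture of the proof of Theorem \ref{th1}, replacing the single-coordinate perturbation suited to $l_\infty^m$ by a ``mass-transfer'' perturbation adapted to the geometry of $l_1^m$, and to use Lemma \ref{combinatorics}(i) to locate where such a perturbation can be performed. As in Theorem \ref{th1}, write $E_X=\{\pm x_1,\dots,\pm x_{n+1}\}$ and let $T\in L(X,Y)$ be an extreme contraction. By Theorem 2.2 of \cite{SRP} we have $\mathrm{span}(M_T\cap E_X)=X$; since $M_T\cap E_X$ is symmetric and spans $X$, its cardinality is even and lies between $2n$ and $2n+2$. If it equals $2n$, the same theorem gives $T(M_T\cap E_X)\subseteq E_Y$ and we are done, so I would reduce to the case $E_X\subseteq M_T$ and argue by contradiction, assuming $T(E_X)\cap E_Y=\phi$. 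Then each $v_i:=Tx_i$ is a non-extreme point of $S_{l_1^m}$, so $|C_i|\geq 2$ by the observation preceding Lemma \ref{combinatorics}. As in Theorem \ref{th1} I take $\{x_1,\dots,x_n\}$ to be a basis and write $x_{n+1}=\sum_{i=1}^n\alpha_i x_i$ with at least two $\alpha_i$ nonzero, setting $\alpha_{n+1}=-1$ so that $\sum_{i=1}^{n+1}\alpha_i x_i=0$.

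The core of the argument is to construct perturbations $\delta v_1,\dots,\delta v_{n+1}$, not all zero, with $\mathrm{supp}(\delta v_i)\subseteq C_i$, subject to two requirements: (b) the perturbation preserves the $l_1$-norm of each $v_i$, and (c) it respects the dependence, $\sum_{i=1}^{n+1}\alpha_i\,\delta v_i=0$. Given these, setting $T_{1,2}\,x_i:=v_i\pm\epsilon\,\delta v_i$ on the basis yields operators with $T=\frac{1}{2}(T_1+T_2)$; requirement (c) makes $T_1x_{n+1}=v_{n+1}+\epsilon\,\delta v_{n+1}$ consistent, so all $n+1$ images stay on $S_{l_1^m}$ and $\|T_1\|=\|T_2\|=\max_i\|T_1x_i\|_1=1$, contradicting extremality once some $\delta v_i\neq 0$. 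The decisive point is that preserving the $l_1$-norm forces one to move mass between two coordinates on which $v_i$ does not vanish, which is exactly why $|C_i|\geq 2$ and Lemma \ref{combinatorics} are needed. Since $m(m-1)\leq n$ gives $n+1>m(m-1)$, Lemma \ref{combinatorics}(i) applied to $v_1,\dots,v_{n+1}$ furnishes a triple $\{r,s,t\}$ and two coordinates $c,d\in C_r\cap C_s\cap C_t$. I would perturb only $v_r,v_s,v_t$, and only in coordinates $c,d$: add $\mu_i$ to the $c$-th coordinate and $-\sigma_i\mu_i$ to the $d$-th coordinate, where $\sigma_i=\mathrm{sgn}(a_{ic}a_{id})$. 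Then (b) holds automatically, while (c) reduces to the two homogeneous equations $\sum_{i\in\{r,s,t\}}\alpha_i\mu_i=0$ and $\sum_{i\in\{r,s,t\}}\alpha_i\sigma_i\mu_i=0$ in the three unknowns $\mu_r,\mu_s,\mu_t$, which necessarily has a nonzero solution.

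I expect the delicate points to be the bookkeeping around the index $n+1$ and the exactness of the norm preservation. Because the triple may contain $n+1$, I would organise the construction symmetrically through the relation $\sum_{i=1}^{n+1}\alpha_i x_i=0$ and verify in every case that the operator $T_1$ defined on the basis really reproduces the prescribed perturbation of $v_{n+1}$; a short check shows that a nonzero solution $(\mu_r,\mu_s,\mu_t)$ cannot be supported on the single index $n+1$, so that $T_1\neq T$. Finally, since $c,d\in C_i$ means $a_{ic},a_{id}\neq 0$, choosing $\epsilon>0$ small keeps each perturbed coordinate of the same sign as before, so that requirement (b) holds exactly and not merely to first order. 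These pieces together contradict the extremality of $T$, establishing $T(E_X)\cap E_Y\neq\phi$ and hence the weak L-P property of $(X,Y)$.
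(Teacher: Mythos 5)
Your proposal is correct, and it follows the paper's overall architecture (reduction via Theorem 2.2 of \cite{SRP}, contradiction by splitting $T$, mass-transfer perturbations preserving the $l_1$-norm, Lemma \ref{combinatorics} to locate a usable triple) but diverges at two genuinely substantive points, both in your favour. First, the paper applies Lemma \ref{combinatorics}(i) only to the $n$ basis images $T(x_1),\dots,T(x_n)$, so it must split into the case $n>m(m-1)$ and the boundary case $n=m(m-1)$; the latter requires part (ii) of the lemma, brings $T(x_{n+1})$ in through the pair with $C_k\cap C_l=\{1,2\}$ matching two nonzero coordinates of $T(x_{n+1})$, and involves a somewhat ad hoc choice of $\kappa$ depending on $|\alpha_1|$. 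By feeding all $n+1$ images into the lemma and observing $n+1>m(m-1)$, you invoke only part (i) and eliminate the case split entirely; your check that the constraint $\sum\alpha_i\mu_i=0$ with $\alpha_{n+1}=-1$ rules out a solution supported solely on the index $n+1$ is exactly the point that makes this legitimate. Second, the paper perturbs only two of the three vectors in the triple and needs a pigeonhole argument on the signs $\sigma,\mu,\nu$ of the coordinate products to ensure two of them share a norm-preserving direction; you instead perturb all three along their individual norm-preserving directions $(1,-\sigma_i)$ and solve two homogeneous equations in three unknowns, which handles every sign pattern uniformly. The paper's route makes part (ii) of the lemma do real work; yours buys a shorter, more symmetric proof at the cost of nothing, since part (i) is all you need. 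The remaining details you flag (exactness of norm preservation for small $\epsilon$, well-definedness of $T_1$ on $x_{n+1}$, $\|T_1\|=\max_{x\in E_X}\|T_1x\|_1=1$) are handled correctly.
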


\begin{proof}
Let $E_X=\{\pm x_1, \pm x_2, \cdots, \pm x_{n+1}\}.$ Let $T\in L(X,Y)$ be an extreme contraction. It follows from Theorem 2.2 of \cite{SRP} that $span(M_T\cap E_X)=X$ and moreover, if $|M_T\cap E_X|=2n$ then $T(M_T \cap E_X)\subseteq E_Y.$ Therefore, in this case we have nothing more to show. Let us assume that $E_X \subseteq M_T. $ We claim that $T(E_X)\cap E_Y\neq \phi.$ If possible, suppose that $T(E_X)\cap E_Y= \phi.$ Without any loss of generality, we may assume that $\{x_1, x_2, \cdots, x_n\}$ is a basis of $X.$ So there exist scalars $\alpha_1,\alpha_2,\ldots ,\alpha_n,$ with at least two of them non-zero, such that $x_{n+1}=\alpha_1x_1+\alpha_2x_2+\cdots +\alpha_nx_n.$ Let for each $ i \in \{1,2,\ldots,n\},$ $T(x_i)=(a_{i1}, a_{i2}, \ldots, a_{1m}).$\\
Therefore, $T(x_{n+1})=\alpha_1T(x_1)+\alpha_2T(x_2)+\cdots +\alpha_nT(x_n)=(b_1,b_2,\cdots ,b_m)\,\,\,\mathrm{(say)}.$
Clearly, $E_Y=\{\pm e_j=(\underbrace{0,0,\cdots,\pm 1}_{j},\cdots,0): 1\leq j\leq m\}.$ Now for each $ i\in \{1,2,\cdots ,n\} $, $ T(x_i) $ are non-extreme points on the unit sphere of $ l_1^{m} $. Therefore $ |C_i|\geq 2$ for each $ i\in \{1,2,\cdots ,n\}, $ where $ C_i = \{ j\in \{ 1, 2, \dots , m \} : a_{ij}\neq 0\} $.\\

Now we complete the proof of the theorem by considering the following two cases:\\

\noindent \textbf{Case 1:}
Let $ n>m(m-1) $. Then by Lemma \ref{combinatorics}, there exists a triplet of distinct numbers $ \{ r, s, t \} \subseteq \{ 1, 2, \dots , n \} $ such that $ | C_r\cap C_s\cap C_t |\geq 2 $. Without any loss of generality, we assume $\{1,2\}\subseteq C_1\cap C_2\cap C_3.$
Let $\sigma=a_{11}a_{12},$ $\mu=a_{21}a_{22}, $ $\nu= a_{31}a_{32}.$ Clearly, $\sigma,\mu,\nu \neq 0 $ and at least  two of them have the same sign. Without any loss of generality, we assume $\sigma,\mu > 0.$ Choose non-zero real numbers $\epsilon, \delta $ in such a way that the following three conditions are satisfied:

$  (i) $  $ \alpha_1\epsilon+\alpha_2\delta=0, $
$$ (ii)~~\|(a_{11}-\epsilon, a_{12}+\epsilon,\cdots ,a_{1m})\|=\|(a_{11}+\epsilon, a_{12}-\epsilon,\cdots ,a_{1m})\|=\|T(x_1)\|, $$
$$(iii)~~\|(a_{21}-\delta, a_{22}+\delta,\cdots ,a_{2m})\|=\|(a_{21}+\delta, a_{22}-\delta,\cdots ,a_{2m})\|=\|T(x_2)\|.$$\\
We define two linear operators $T_1,T_2 ~:~ X\rightarrow Y $ by \\
$$T_1(x_1)=(a_{11}-\epsilon, a_{12}+\epsilon,\cdots ,a_{1m}),\,\,\,\,\,\,\,\,\,\,\,\,T_2(x_1)=(a_{11}+\epsilon, a_{12}-\epsilon,\cdots ,a_{1m}),$$
$$T_1(x_2)=(a_{21}-\delta, a_{22}+\delta,\cdots ,a_{2m}),\,\,\,\,\,\,\,\,\,\,\,\,\,T_2(x_2)=(a_{21}+\delta, a_{22}-\delta,\cdots ,a_{2m}), $$
$T_1(x_i)=T(x_i)\, \forall~~ i\in \{ 3,\cdots,n\}, $ \, \hfil \,  $ T_2(x_i)=T(x_i)\, \forall~~ i\in \{3,\cdots,n\}. $\\

Then $T_1(x_{n+1})=T_2(x_{n+1})=T(x_{n+1})$, $T=\frac{1}{2}(T_1+T_2),$ $\|T_1\|=\|T_2\|=1$. In addition $T_1,T_2 \neq T.$ In other words $ T $ is not an extreme contraction.\\

\noindent \textbf{Case 2:}
Let $ n=m(m-1) $. If there exists a triplet of distinct numbers $ \{ r, s, t \} \subseteq \{ 1, 2, \dots , n \} $ such that $ | C_r\cap C_s\cap C_t |\geq 2 $ then it follows from the arguments given in the analysis of  \textbf{Case 1} that $ T $ is not an extreme contraction. Now suppose for any triplet of distinct numbers $ \{ r, s, t \} \subseteq \{ 1, 2, \dots , k \} $, $ | C_r\cap C_s\cap C_t|\leq 1 $. According to our assumptions, we have that $ \|T(x_{n+1})\| = 1 $ and $ T(x_{n+1})\not\in E_Y $. Therefore $ T(x_{n+1}) $ has at least two non-zero coordinates, say $b_1,b_2.$ It follows from Lemma \ref{combinatorics} that there exist distinct $k,l \in \{1,2, \ldots, n\}$ such that $ C_k\cap C_l=\{1,2\} $. Without any loss of generality, we assume that $k=1$ and $l=2.$ Let $ \zeta=b_{1}b_{2} $, $ \sigma=a_{11}a_{12} $ and $ \mu=a_{21} a_{22} $. Clearly, $\sigma,\mu,\zeta \neq 0 $ and at least two of them have the same sign. If $\sigma,\mu$ have the same sign, then once again by similar arguments as given in \textbf{Case 1}, it follows that $ T $ is not an extreme contraction. Now we assume that $ \sigma,\zeta $ have the same sign. Without any loss of generality, we assume that $ \sigma,\zeta >0 $. Then we can choose $ \epsilon,\delta  > 0 $ such that the following two conditions hold:\\
$ (i)$ $ \|(a_{11}-\epsilon, a_{12}+\epsilon,\cdots ,a_{1m})\|=\|(a_{11}+\epsilon, a_{12}-\epsilon,\cdots ,a_{1m})\|=\|T(x_1)\|,$\\
$ (ii) $ $ \|(b_1-\delta,b_2+\delta,\cdots ,b_m)\|=\|(b_1+\delta,b_2-\delta,\cdots ,b_m)\|=\|T(x_{n+1})\|. $\\

Let $\lambda = \min\{\epsilon,\delta\}.$ Let $\kappa=\frac{\lambda}{\alpha_1};$  if $|\alpha_1|\geq 1$ and $\kappa=\lambda,$ if $ |\alpha_1| < 1. $  We define two linear operators $T_1,T_2 ~:~ X \rightarrow Y $ by 

$$ T_1(x_1)=(a_{11}-\kappa, a_{12}+\kappa,\cdots ,a_{1m}), \,\,\,\,\,\,\,\, T_2(x_1)=(a_{11}+\kappa, a_{12}-\kappa,\cdots ,a_{1m}), $$
$$ T_1(x_i)=T(x_i)\, \forall~~ i\in \{2, \cdots,n\},  \,\,\,\,\,\,\,\,\,\,\,\,T_2(x_i)=T(x_i)\, \forall~~ i\in \{2, \cdots,n\}. $$

Then,
$$\|T_1(x_{n+1})\|=\|(b_1-\alpha_1\kappa,b_2+\alpha_1\kappa,\cdots ,b_m)\|=\|T(x_{n+1})\|= 1,$$ 
$$\|T_2(x_{n+1})\|=\|(b_1+\alpha_1\kappa,b_2-\alpha_1\kappa,\cdots ,b_m)\|=\|T(x_{n+1})\|= 1.$$
Therefore, $T=\frac{1}{2}(T_1+T_2),$ $\|T_1\|=\|T_2\|=1$ and $T_1,T_2 \neq T.$ Clearly, this contradicts that $T$ is an extreme contraction. In other words, whenever $ T \in L(X,Y) $ is an extreme contraction, we have that $ T(E_{X}) \cap E_{Y} \neq \phi. $ This completes the proof of the theorem.
\end{proof}
In the next example, we illustrate that the assumption $m(m-1) \leq n$ in the above theorem cannot be dropped.

\begin{example} \label{ex2}
Let $X$ be the same Banach space as considered in
Example \ref{ex1} and let $Y=l_{1}^3.$ We define $T:X\rightarrow Y$ in the following way:\\

$$ T(x_1)=(\frac{1}{2},\frac{1}{2},0),\,\,\, T(x_2)=(0,\frac{1}{2},\frac{1}{2}). $$
Therefore, $ T(x_3)=(-\frac{1}{2},0,\frac{1}{2}). $ We claim that $T$ is an extreme contraction. If not, then there exists $T_1,T_2\in S_{L(X,Y)}$ such that $T\neq T_1,T_2$ and $T=\frac{1}{2}(T_1+T_2).$ Let 
$$T_1(x_1)=(\frac{1}{2}+\epsilon_{11},\frac{1}{2}+\epsilon_{12},\epsilon_{13}),\,\,\,\,T_1(x_2)=(\epsilon_{21},\frac{1}{2}+\epsilon_{22},\frac{1}{2}+\epsilon_{23}),$$
where $\epsilon_{ij} $ are arbitrary real numbers for all $i\in\{1,2\}$ and for all $j\in\{1,2,3\}.$ Since $T=\frac{1}{2}(T_1+T_2),$ we must have,
$$T_2(x_1)=(\frac{1}{2}-\epsilon_{11},\frac{1}{2}-\epsilon_{12},-\epsilon_{13}),\,\,\,\,T_2(x_2)=(-\epsilon_{21},\frac{1}{2}-\epsilon_{22},\frac{1}{2}-\epsilon_{23}).$$
From the fact that $\|T_1\|=\|T_2\|=1, $ it is immediate that $ |\epsilon_{11}|,|\epsilon_{12}|,|\epsilon_{22}|,|\epsilon_{23}|\leq \frac{1}{2} $. Therefore, we observe that $ \frac{1}{2} \pm \epsilon_{11}, \frac{1}{2} \pm \epsilon_{12}, \frac{1}{2} \pm \epsilon_{22}, \frac{1}{2} \pm \epsilon_{23} \geq 0. $ Now, we have, $ \| T_1 (x_1) \| = 1 + (\epsilon_{11}+\epsilon_{12}) + |\epsilon_{13}| $ and $ \| T_2 (x_1) \| = 1 - (\epsilon_{11}+\epsilon_{12}) + |\epsilon_{13}| $, which implies that $ (\epsilon_{11} + \epsilon_{12}) = 0, $ i.e., $ \epsilon_{11} = - \epsilon_{12} $ and $ \epsilon_{13} = 0 $. Following similar arguments, we can deduce that $ \epsilon_{22} = -\epsilon_{23} $ and $ \epsilon_{21} = 0 $. Therefore, we have,
$ T_1(x_3)=(-\frac{1}{2}-\epsilon_{11},\epsilon_{11}+\epsilon_{22},\frac{1}{2}-\epsilon_{22}) $ and $ T_2(x_3)=(-\frac{1}{2}+\epsilon_{11},-\epsilon_{11}-\epsilon_{22},\frac{1}{2}+\epsilon_{22}) $. Since $ \|T_1(x_3)\|=\|T_2(x_3) \| \leq 1 $, we have $ | \epsilon_{11}+\epsilon_{22}| \leq (\epsilon_{11}-\epsilon_{22}) $ and $ | \epsilon_{11}+\epsilon_{22}| \leq (\epsilon_{22}-\epsilon_{11}) $. Consequently, $ \epsilon_{11}= \epsilon_{22} = 0 $. This gives us $T_1=T_2=T, $ which is a contradiction. Hence, $T$ is an extreme contraction, although $T(E_X)\cap E_Y=\phi.$ Therefore, the pair $ (X,Y) $ does not have weak L-P property.
\end{example}

We would like to illustrate the applicability of the concept of weak L-P property for a pair of Banach spaces in the study of extreme contractions in some concrete situations. Our next result is oriented towards serving the said goal.

\begin{theorem}
Let $X$ be a $2$-dimensional real Banach space whose unit sphere is a regular hexagon and let $Y=l_{\infty}^2.$ Then $|E_{L(X,Y)}|=36.$
\end{theorem}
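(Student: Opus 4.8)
The plan is to bypass any operator-by-operator analysis and instead exploit the special structure of the target space $l_\infty^2$. The key observation is that whenever $Y = l_\infty^m$, the space $L(X,Y)$ admits a clean isometric description. Writing an operator $T \in L(X, l_\infty^m)$ as $Tx = (f_1(x), \dots, f_m(x))$ with $f_i = \pi_i \circ T \in X^*$, where $\pi_i$ is the $i$-th coordinate projection, one has $\|Tx\|_\infty = \max_{1 \le i \le m} |f_i(x)|$, and therefore $\|T\| = \max_{1 \le i \le m}\|f_i\|_{X^*}$. Thus the linear bijection $T \mapsto (f_1,\dots,f_m)$ is an isometry from $L(X, l_\infty^m)$ onto the $\ell_\infty$-sum $X^* \oplus_\infty \cdots \oplus_\infty X^*$ ($m$ copies), whose closed unit ball is exactly the Cartesian product $B_{X^*} \times \cdots \times B_{X^*}$.

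First I would record the elementary but crucial fact that the extreme points of a Cartesian product of convex bodies — equivalently, of the unit ball of an $\ell_\infty$-sum — are precisely those tuples each of whose coordinates is an extreme point of the corresponding factor; that is, $E_{L(X, l_\infty^m)} = (E_{X^*})^m$, and hence $|E_{L(X,Y)}| = |E_{X^*}|^m$. For $Y = l_\infty^2$ this reduces the entire problem to the single count $|E_{L(X,Y)}| = |E_{X^*}|^2$.

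The remaining step is to compute $|E_{X^*}|$ when $B_X$ is a regular hexagon. Here I would use the duality of centrally symmetric plane polytopes: the polar of a hexagon (with $6$ vertices and $6$ edges) is again a hexagon, since polarity exchanges the roles of vertices and edges. Concretely, the six edges of $B_X$ produce six supporting linear functionals, and these are exactly the six vertices of $B_{X^*}$. Hence $|E_{X^*}| = 6$, and so $|E_{L(X,Y)}| = 6^2 = 36$, as claimed.

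I expect the point requiring care — rather than genuine difficulty — to be the verification that the extreme points of the $\ell_\infty$-sum factor as a product (standard, but to be stated cleanly) together with a correct enumeration of $E_{X^*}$, i.e. confirming that the six supporting functionals arising from the hexagon's edges are genuinely extreme and pairwise distinct, so that no degeneracy collapses the count. I would note that this route makes the weak L-P property of Theorem \ref{th1} only a consistency check rather than the engine of the proof, since the product structure already determines exactly which operators are extreme. If instead one preferred to remain within the framework of the preceding theorems, one could alternatively begin from the weak L-P property — every extreme contraction sends some vertex of the hexagon to a vertex $(\pm 1, \pm 1)$ of the square — and enumerate the admissible images of $E_X$ directly; but the $\ell_\infty$-sum argument is considerably shorter, and I would adopt it.
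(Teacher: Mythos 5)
Your proposal is correct, and it takes a genuinely different route from the paper. The paper works entirely inside the operator space: it invokes Theorem 2.2 of \cite{SRP} to get $span(M_T\cap E_X)=X$, splits into the cases $|M_T\cap E_X|=4$ and $|M_T\cap E_X|=6$, uses Theorem \ref{th1} to force some vertex of the hexagon onto a vertex of the square, and then verifies extremality of each candidate by explicit perturbation arguments, arriving at $12+24=36$. You instead observe that $T\mapsto(\pi_1\circ T,\pi_2\circ T)$ is an isometry of $L(X,l_\infty^2)$ onto $X^*\oplus_\infty X^*$, whose unit ball is the product $B_{X^*}\times B_{X^*}$; since the extreme points of a product of convex sets are exactly the products of extreme points, $|E_{L(X,Y)}|=|E_{X^*}|^2$, and polarity sends the regular hexagon to a hexagon, giving $6^2=36$. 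Both identifications you rely on are standard and correct, and your count is consistent with the paper's case split: the $12$ operators of possibility (1) are exactly the pairs $(f_1,f_2)$ with $f_2=\pm f_1$, and the $24$ of possibility (2) are the pairs with $f_2\neq\pm f_1$. What your route buys is brevity and generality: it yields $|E_{L(X,l_\infty^m)}|=|E_{X^*}|^m$ for any finite-dimensional $X$, and in fact characterizes the extreme contractions completely (each coordinate functional must be extreme in $B_{X^*}$), not merely counts them. What it gives up --- as you yourself note --- is the paper's purpose of exhibiting the theorem as an \emph{application} of the weak L-P property; in your argument Theorem \ref{th1} plays no role. If this result is meant to illustrate that machinery, the paper's longer enumeration is the pedagogically intended one; as a standalone computation, yours is preferable.
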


\begin{proof}
 Without any loss of generality, we assume that the vertices of $ S_{X} $ are given by $\pm x_1=\pm(1,0),\pm x_2=\pm({\frac{1}{2}},{\frac{\sqrt 3}{2}}),\pm x_3=\pm(-{\frac{1}{2}},{\frac{\sqrt 3}{2}}).$ Let $ T \in L(X, Y) $ be an extreme contraction. Then from Theorem $ 2.2 $ of \cite{SRP}, it follows that $ span(M_T \cap E_X) = X. $ Moreover, Theorem \ref{th1} of the present paper implies that there exists atleast one $ x_i \in E_X, $ $ i \in \{1,2,3\}, $ such that $ T(x_i) \in E_Y. $ Now, there are two possibilities:\\
 
{\bf  (1)  $ | M_T\cap E_X | = 4, $} \,  {\bf  (2)  $ | M_T\cap E_X | = 6 .$}\\

Using Theorem $ 2.2 $ of \cite{SRP}, it follows that $ T( M_T \cap E_X ) \subseteq E_Y $ in the first possibility. We subdivide possibility {\bf (1) } into three possible cases:\\

{\bf (i)  $ x_1, x_2 \in M_T, $  (ii)  $ x_2, x_3 \in M_T, $  (iii)  $ x_1, x_3 \in M_T $}. \\

We note that if $ a,b \in E_Y $ are distinct then $ \| a-b \| = 2 $. Since $ x_3 = x_2- x_1 $, it follows that $ T(x_1)=T(x_2) $ in case \textbf{(i)}.  In that case it is easy to see that $ 4 $ such extreme contractions are possible. Also, $ x_1 = x_2 - x_3 $ and therefore similar conclusion holds true for case \textbf{(ii)}. Now we consider case \textbf{(iii)}. Since $ x_3 + x_1 = x_2 $, clearly $ T(x_3)\neq T(x_1) $. Moreover, $ T(x_1) $ and $ T(x_3) $ cannot be linearly independent since for any pair of linearly independent vectors $ \{ a,b \} \subseteq E_Y $, $ \| a+b \| = 2 $. Therefore, the only possibility is $ T (x_1) = - T(x_3), $ which provides $ 4 $  choices for $ T(x_1) $. Therefore, we obtain exactly $ 12 $ extreme contractions from possibility \textbf{(1)}.\\
Now, suppose $ | M_T\cap E_X | = 6 $.  Without any loss of generality, suppose $ T(x_1) = (1,1) $ and then there are the following $ 4 $ possibilities for $ Tx_2: $\\

{\bf(a) $ T(x_2) = (1,t), $ (b) $ T(x_2) = (-1,t), $ (c) $ T(x_2) = (t,1), $ (d) $ T(x_2) = (t,-1) $},  where $ t \in [-1,1]. $\\

Let us separately consider all the above possibilities:\\

\textbf{(a)} Let $ T(x_2) = (1,t), $ where $ t \in [-1,1]. $ Then $ T(x_3) = T(x_2)-T(x_1) = (0,t-1), $ as $ x_3=x_2-x_1. $ Therefore, $ \|Tx_3\| > 1 $ for all $ t \in [-1,0). $ This is a contradiction to the fact that $ \|T\|=1. $ Also, $ \|T(x_3)\| < 1, $ for all $ t \in (0,1]. $ This contradicts our assumption that $ x_3 \in M_T. $ We therefore deduce that the only possibility is $ t=0, $ i.e., $ T(x_2) =(1,0). $ Then $ T(x_3)=(0,-1). $ Now, we claim that $ T \in L(X,Y), $ given by $ T(x_1)=(1,1), ~ T(x_2)=(1,0) $ is an extreme contraction. If not, then there exist $ T_1, T_2 \in S_{L(X,Y)} $ such that $ T_1, T_2 \neq T $ and $ T= \frac{1}{2}(T_1+T_2).$ Suppose, 
$$ T_1(x_1)= (1+ \epsilon_{11},1+ \epsilon_{12}), \,\,\, T_1(x_2)= (1+\epsilon_{21},\epsilon_{22}), $$ where $\epsilon_{ij} $ are arbitrary real numbers for all $i,j\in\{1,2\}.$ Then,
$$ T_2(x_1)= (1- \epsilon_{11},1-\epsilon_{12}), \,\,\, T_2(x_2)= (1-\epsilon_{21},-\epsilon_{22}), $$ 
as otherwise, $ T = \frac{1}{2}(T_1+T_2) $ is not possible. Since $ T_1, T_2 \in S_{L(X,Y)}, $ it is easy to see that $ \epsilon_{11}=\epsilon_{12}=\epsilon_{21}=0. $ Hence, $ T_1(x_1)= (1,1), ~~~ T_1(x_2)= (1,\epsilon_{22}), $ and $ T_2(x_1)= (1,1), ~~~ T_2(x_2)= (1,-\epsilon_{22}). $ Then $T_1(x_3)=(0, \epsilon_{22}-1) $ and $ T_2(x_3)=(0,-\epsilon_{22}-1). $ If $ \epsilon_{22} > 0, $ then $ \|T_2(x_3)\|=1+\epsilon_{22}>1 $ and if $ \epsilon_{22} < 0, $ then $ \|T_1(x_3)\|=1-\epsilon_{22}>1. $ In both cases, we get a contradiction to the fact that $ \|T_1\|=\|T_2\|=1. $ Therefore $ \epsilon_{22} = 0 $ and $ T $ must be an extreme contraction.\\

\textbf{(b)} Let $ T(x_2) = (-1,t), $ where $ t \in [-1,1]. $ Then $ T(x_3) = T(x_2)-T(x_1) = (-2,t-1), $ as $ x_3=x_2-x_1. $ Therefore, $ \|T(x_3)\| = 2 $ for all $ t \in [-1,1], $ which is a contradiction to the fact that $ \|T\|=1. $ So, this case is not possible.\\

{\bf (c) } Proceeding in the similar way like $ {\bf (a) } $, we can prove that $ T $ is an extreme contraction only when $ t = 0 $, i.e., $ T(x_2)=(0,1) $.\\

{\bf (d) } This case is similar to case {\bf (b)}.\\

 \textbf{Conclusion:} If $ T(x_1)=(1,1) $ and $ x_2, x_3 \in M_T, $ then there are $ 2 $ extreme contractions. Similarly, if $ T(x_1) $ is any one of the other three extreme points of $ B_Y $ and $ x_2, x_3 \in M_T, $ we get $ 2 $ other extreme contractions in each cases. Thus, if $ T(x_1) $ is extreme point of $ B_Y $ and $ x_2, x_3 \in M_T, $ there are exactly $ 8 $ extreme contractions. Similar argument is applicable if we assume $ T(x_2) $ is extreme point of $ B_Y $ and $ x_1, x_3 \in M_T. $ Indeed, in this case, there are also exactly $ 8 $ extreme contractions. The same happens when $ T(x_3) $ is extreme point of $ B_Y $ and $ x_1, x_2 \in M_T. $  Therefore, we obtain a total of $ 24 $ extreme contractions from possibility \textbf{(2)}. Now, combining possibilities \textbf{(1)} and \textbf{(2)}, we conclude that there are exactly $ 36 $ extreme contractions in $ L(X, Y). $ This establishes the theorem.
\end{proof}

The following example illuminates that the number of extreme points of the unit ball of the domain space plays a vital role in determining the weak L-P property for a pair of Banach spaces. In particular, it shows that Theorem \ref{th1} no longer holds true if the number of extreme points of the unit ball of the domain space is strictly greater than $ 2n+2, $ where $ n $ is the dimension of $ X. $

\begin{example}\label{ex3}
Let $X$ be a two dimensional real Banach space whose unit sphere is a regular octagon with vertices, 
$$\pm x_1= \pm(1,0),\pm x_2= \pm(\frac{1}{\sqrt 2},\frac{1}{\sqrt 2}),\pm x_3= \pm(0,1),\pm x_4= \pm(-\frac{1}{\sqrt 2},\frac{1}{\sqrt 2})$$ and let $Y=l_{\infty}^2.$
Clearly, $\{x_1,x_2\}$ forms a basis for $X.$ Also, $x_3=\sqrt {2}x_2-x_1$ and $x_4= x_2-\sqrt {2}x_1.$ Now, we define $T:X\rightarrow Y$ in the following way:\\

$$T(x_1)=(1,\sqrt {2}-1),\,\,\,\,T(x_2)=(\sqrt {2}-1,1)$$

Therefore,
$$T(x_3)=(-\sqrt {2}+1,1),\,\,\,\,T(x_4)=(-1,\sqrt {2}-1).$$
We claim that $T$ is an extreme contraction. If not, then there exist $T_1,T_2\in S_{L(X,Y)}$ such that $ T_1, T_2 \neq T $ and $T=\frac{1}{2}(T_1+T_2).$ Suppose
$$T_1(x_1)=(1+\epsilon_{11},\sqrt {2}-1+\epsilon_{12}),\,\,\,\,T_1(x_2)=(\sqrt {2}-1+\epsilon_{21},1+\epsilon_{22}),$$
where $\epsilon_{ij} $ are arbitrary real numbers for all $i,j\in\{1,2\}.$ Then
$$T_2(x_1)=T(1,0)=(1-\epsilon_{11},\sqrt {2}-1-\epsilon_{12}),$$
$$T_2(x_2)=T(\frac{1}{\sqrt 2},\frac{1}{\sqrt 2})=(\sqrt {2}-1-\epsilon_{21},1-\epsilon_{22}),$$ 
as otherwise, $ T=\frac{1}{2}(T_1+T_2) $ is not possible. Since $ \| T_1 \| = \| T_2 \| = 1, $ it is easy to see that $\epsilon_{11},\epsilon_{22}=0.$ Hence,
$$T_1(x_1)=T_1(1,0)=(1,\sqrt {2}-1+\epsilon_{12}),\,\,\,\,T_1(x_2)=T_1(\frac{1}{\sqrt 2},\frac{1}{\sqrt 2})=(\sqrt {2}-1+\epsilon_{21},1).$$

Therefore, $ T_1(x_3)=\sqrt {2}T_1(x_2)-T_1(x_1)=(1-\sqrt{2}+\sqrt{2}\epsilon_{21},1-\epsilon_{12}) $ and $ T_1(x_4)=T_1(x_2)-\sqrt {2}T_1(x_1)=(-1+\epsilon_{21},-1+\sqrt{2}-\sqrt{2}\epsilon_{12}) .$
Similarly,
$$T_2(x_1)=T_2(1,0)=(1,\sqrt {2}-1-\epsilon_{12}), \,\,\,\, T_2(x_2)=T_2(\frac{1}{\sqrt 2},\frac{1}{\sqrt 2})=(\sqrt {2}-1-\epsilon_{21},1).$$

Therefore, $ T_2(x_3)=\sqrt {2}T_2(x_2) - T_2(x_1)=(1-\sqrt{2}-\sqrt{2}\epsilon_{21},1+\epsilon_{12}) $ and $ T_2(x_4)=T_2(x_2)-\sqrt {2}T_2(x_1)=(-1-\epsilon_{21},-1+\sqrt{2}+\sqrt{2}\epsilon_{12}). $
Now, if $\epsilon_{12}> 0$ then $\|T_2(x_3)\| \geq 1+\epsilon_{12}>1$ and if $\epsilon_{12}<0$ then $\|T_1(x_3)\| \geq 1-\epsilon_{12}>1,$ which contradicts our assumption that $\|T_1\|=\|T_2\|=1.$ Therefore, $\epsilon_{12}=0.$ Similarly, $\epsilon_{21}=0.$ Thus, $T_1=T_2=T, $ which is a contradiction. This proves that $T$ is an extreme contraction but $ T(E_X)\cap E_Y = \phi. $ Hence, the pair $ (X,Y) $ does not have weak L-P property.
\end{example}

In view of the results obtained in the present article, it is perhaps appropriate to end the article with the following open question:\\

\textbf{Open question:} Let $ X $ and $ Y $ be real Banach spaces. Obtain a necessary and sufficient condition on $ X $ and $ Y $ for the pair $ (X,Y) $ to have weak L-P property.

\bibliographystyle{amsplain}

\end{document}